\documentclass[a4paper,USenglish,cleveref,autoref]{lipics-v2021}

\pdfoutput=1
\hideLIPIcs

\usepackage{mathtools}
\usepackage{tikz}
\usetikzlibrary{positioning}
\usepackage[ruled,linesnumbered]{algorithm2e}
\nolinenumbers

\title{Concave Continuation: Linking Routing to Arbitrage}

\author{Ruichao Jiang}{Derivation Technology Ltd.}{ruichao.jiang@derivation.info}{}{}
\author{Long Wen}{Derivation Technology Ltd.}{long.wen@derivation.info}{}{}

\authorrunning{R. Jiang and L. Wen}

\ccsdesc[500]{Applied computing~Digital cash}
\ccsdesc[300]{Theory of computation~Design and analysis of algorithms}

\keywords{Arbitrage, Automated Market Maker, Decentralized Finance, Routing Algorithm}

\begin{document}

\maketitle

\begin{abstract}
We extend AMM trade functions to negative inputs via the \textit{concave continuation}, derived from the invariance of the local conservation law under allocation direction flips. This unifies routing and arbitrage into a single problem. We extend the one-hop transfer algorithm proposed in \cite{jiang} to this setting.
\end{abstract}

\section{Introduction}

Decentralized Exchanges (DEXes) use Automated Market Makers (AMMs) \cite{univ2,univ3} to provide liquidity. When liquidity is fragmented across multiple venues, \textit{routing problem} is: Finding the best execution to maximize the output \cite{angeris-routing,diamandis-routing,jiang}. The transfer algorithm, proposed in \cite{jiang} and adopted by the Monday Trade protocol \cite{monday}, solves the one-hop routing problem by iteratively equalizing prices across pools. Despite its simplicity, the empirical study conducted by Xi and Moallemi \cite{xi-moallemi} showed that transfer algorithm would outperform many historical transactions.

On the other hand, when there are multiple AMMs between two same tokens, arbitrage opportunity may exist. The \textit{arbitrage problem} is: Use price differences to extract as much value as possible.

Routing problem is naturally related to arbitrage problem. In fact, if the allocation to some AMMs is allowed to be negative, arbitrage problem is nothing but routing problem with input size being zero.

In this article, we define \textit{concave continuation} (\Cref{def:concave-cont}) to extend AMM trade function (\Cref{def:trade-function}) to negative inputs. The extension is uniquely determined by the requirement that the local conservation law (\Cref{def:conservation-law}) be invariant when the trade direction within an AMM flips. In addition, it turns out that the concave continuation can be computed on-chain via Uniswap V3 type API.

The organization of this article is as follows. We fix the notation in \Cref{sec:background}. We present the local conservation law in AMM routing problem in \Cref{sec:conservation} and define concave continuation in \Cref{sec:concave-continuation}. In \Cref{sec:algorithm}, we propose the extended transfer algorithm and prove a theorem about its runtime behavior, from which the proofs of convergence and convergence rate from \cite{jiang,jiang-convergence} can be easily ported. In \Cref{sec:experiments}, we perform three experiments to show the efficiency of the extended transfer algorithm and that it correctly finds arbitrage opportunity.
\section{Related Work} \label{sec:related}
Angeris et al. \cite{angeris-routing} formulate optimal routing across CFMMs (Constant Function Market Makers) as a convex program solved via CVXPY.

Diamandis et al. \cite{cfmmrouter-code,diamandis-routing} solve the CFMMs routing more efficiently using L-BFGS-B (Limited-memory Broyden–Fletcher–Goldfarb–Shanno with bounds).

Diamandis et al. \cite{convex-flows} propose \textit{convex flows} over hypergraphs, unifying CFMMs routing, arbitrage, and optimal power flow.

Jiang et al. \cite{jiang} propose the one-hop transfer algorithm for routing. Xi and Moallemi \cite{xi-moallemi} propose essentially the same one-hop transfer algorithm and their empirical result  demonstrates this simple algorithm performed better than many historical on-chain swap result.

Jiang and Wen \cite{jiang-convergence} establishes an upper bound $\mathcal{O}\left(\kappa N\log\frac{1}{\varepsilon}\right)$ for the convergence rate of the one-hop transfer algorithm, where $\kappa$ is a liquidity parameter, $N$ the number of AMMs, and $\varepsilon$ some tolerance parameter. The upper bound is optimal in $N$.
\section{Preliminaries} \label{sec:background}
\begin{definition}[Trade function] \label{def:trade-function}
    The AMM's \textit{trade function}
    \begin{equation*}
        E_{X,Y}:[0,\infty)\to[0,\infty)
    \end{equation*}
    maps input $x$ of $X$ to the output of $Y$ received. The reverse trade function
    \begin{equation*}
        E_{Y,X}:[0,\infty)\to[0,\infty)
    \end{equation*}
    maps input $y$ of $Y$ to the output of $X$. We assume both are $C^1$, monotonically increasing, concave, and satisfy $E_{X,Y}(0)=E_{Y,X}(0)=0$.
\end{definition}
We assume the following conditions on $E_{X,Y}$ and $E_{Y,X}$
\begin{enumerate}
    \item $C^1$ (continuously differentiable),
    \item $E'_{X,Y}>0$, $E'_{Y,X}>0$ (monotonically increasing),
    \item Strictly concave (diminishing marginal utility),
    \item $E_{X,Y}(0)=E_{Y,X}(0)=0$ (no free lunch),
    \item $E'_{X,Y}(0)\cdot E'_{Y,X}(0)=1$ (change of num\'eraire)
\end{enumerate}
\begin{remark*}
    For Uniswap V3, if there exists no liquidity over a price range, then the trade function will not even be $C^1$. However, such case is excluded when there exists a background liquidity over the full range. Therefore, we assume that there always exists a background liquidity when dealing with Uniswap V3 pools. The $C^1$ assumption makes the presentation of this article easier as we often use first derivative as optimality condition. However, we don't assume $C^2$ differentiability because the trade function is not $C^2$ in Uniswap V3 when there are jumps in liquidity.
\end{remark*}
$E_{X,Y}'(x)$ is the \textit{price of $X$ in $Y$} when the AMM is allocated $x$ amount of $X$. Its reciprocal
\begin{equation*}
    P(x)\coloneqq\frac{1}{E_{X,Y}'(x)}
\end{equation*}
is the \textit{price of $Y$ in $X$}. The change of num\'eraire assumption says that it coincides with the first derivative of the inverse trade function.
\begin{example}[Trade functions in Uniswap V2] \label{example:v2-trade-functions}
    Uniswap V2 has trade functions
    \begin{equation*}
        E_{X,Y}(x)=\frac{r_Yx}{r_X+x}
    \end{equation*}
    and
    \begin{equation*}
        E_{Y,X}(y)=\frac{r_Xy}{r_Y+y}
    \end{equation*}
    where $r_X$ and $r_Y$ are parameters (pool's reserves).
\end{example}
We formulate the one-hop token allocation problem. A trader sells $x$ units of token $X$ for the maximum amount of token $Y$, splitting across $N$ pools:
\begin{equation} \label{eqn:nonneg}
    \begin{split}
        &\max_{\mathbf{x}}F(\mathbf{x})=\sum_{i=1}^N E_{X,Y}^i(x_i)\\
        &\text{subject to}:x_i\geq0,\ \sum_{i=1}^Nx_i=x.
    \end{split}
\end{equation}
As a convex optimization problem, Problem \eqref{eqn:nonneg} has a unique maximizer $\mathbf{x}^*$. The KKT conditions require all active pools to share a common marginal output:
\begin{equation*}
    E_{X,Y}^{'i}(x_i^*)=\lambda^*
\end{equation*}
for pools with non-zero allocation and
\begin{equation*}
    E_{X,Y}^{'i}(0)\leq\lambda^*
\end{equation*}
for inactive pools.

The transfer algorithm solves Problem \eqref{eqn:nonneg} by iteratively transferring allocation from the most overpriced (highest $P$) pool to the cheapest, using a halving rule to determine the transfer amount.
\begin{algorithm}[ht]
\caption{Transfer algorithm~\cite{jiang}}\label{alg:transfer}
\KwIn{Pools $E_{X,Y}^1,\ldots,E_{X,Y}^N$; total input $x$; tolerance $\varepsilon$}
\KwOut{Allocation $\mathbf{x}$}
Initialize \;
\While{$\frac{P_{\max}-P_{\min}}{P_{\max}}>\varepsilon$}{
    $D\gets\arg\max_i P_i(x_i)$ \tcp*{donor: most overpriced}
    $R\gets\arg\min_i P_i(x_i)$ \tcp*{receiver: cheapest}
    $\delta\gets\frac{x_D}{2}$ \tcp*{halving rule}
    \lWhile{$P_D(x_D-\delta)<P_R(x_R+\delta)$}{$\delta\gets\frac{\delta}{2}$}
    $x_D\gets x_D-\delta$ \;
    $x_R\gets x_R+\delta$ \;
}
\Return{$\mathbf{x}$}
\end{algorithm}
\section{Local Conservation Law} \label{sec:conservation}
In Problem \eqref{eqn:nonneg}, we assumed that the trading direction is from $X$ to $Y$. This is analogous to assigning the direction of electric flow when solving a circuit. Once the direction is fixed, the conservation law (Kirchhoff's law) for the circuit can be written down. However, if the solution on one edge is negative, it doesn't mean that there exists electric flow with negative amplitude. It only means we assigned a ``wrong'' direction for that edge. If we flip the preassigned direction at that edge, the solution will flip to positive on that edge. During this re-assignment, the form of the conservation law doesn't change. The preassigned direction is a redundancy in the description of the system. In physics, such redundancy is called a gauge symmetry. We will show exactly the same for AMM routing.
\begin{definition}[Local conservation law] \label{def:conservation-law}
    At a node $N$ with incoming edges carrying flows $e_1,\ldots,e_k$ and outgoing edges consuming flows $n_1,\ldots,n_m$, the \textit{local conservation law} is
    \begin{equation} \label{eqn:conservation}
        \sum_{j=1}^k e_j = \sum_{l=1}^m n_l.
    \end{equation}
\end{definition}
Consider node $Y$ (the buy token) with $N$ incoming edges from the pools and one outgoing edge to the trader. Pool~$i$ sends $E_{X,Y}^i(x_i)$ of $Y$ to the node, and the trader receives $F=\sum_i E_{X,Y}^i(x_i)$. The conservation law at $Y$ is simply
\begin{equation*}
    \sum E_{X,Y}^i(x_i)=F,
\end{equation*}
the objective of~\eqref{eqn:nonneg}. At node $X$ (the sell token), conservation gives
\begin{equation*}
    \sum x_i = X,
\end{equation*}
the budget constraint.

Now suppose that AMM $i$ flips direction: instead of receiving $Y$, it send out $Y$ via AMM $i$ to the input node.
\begin{figure}[hbt!]
    \begin{center}
    \begin{tikzpicture}[>=stealth, node distance=2.8cm, every node/.style={font=\small}]
    \node[draw, circle, minimum size=1cm] (Y) {$Y$};
    \node[left=of Y] (X) {Output};
    \node[above right=1.2cm and 2cm of Y] (P1) {Pool $1$};
    \node[right=of Y] (Pj) {Pool $j$};
    \node[below right=1.2cm and 2cm of Y] (Pn) {Pool $N$};
    \node[below left=1.2cm and 2cm of Y] (Pi) {Pool $i$};

    \draw[->] (P1) -- node[above, sloped] {$E_{X,Y}^{(1)}(x_1)$} (Y);
    \draw[->] (Pj) -- node[above] {$E_{X,Y}^{(i)}(x_i)$} (Y);
    \draw[->] (Pn) -- node[below, sloped] {$E_{X,Y}^{(N)}(x_N)$} (Y);
    \draw[->, dashed] (Y) -- node[below, sloped] {$n_i$ (flipped)} (Pi);
    \draw[->] (Y) -- node[below] {$F$} (X);
    \end{tikzpicture}
    \end{center}
    \caption{AMM $i$ flips direction}
    \label{fig:direction-flip}
\end{figure}
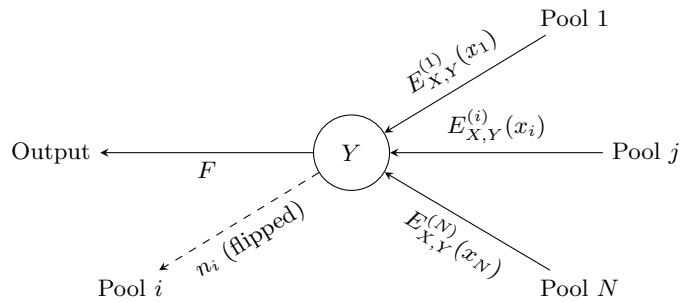

Before the flip, AMM $i$ contributes $E_{X,Y}^i(x_i)$ to the output. After the flip, AMM $i$ draws $n_i>0$ of $Y$ from the output node and returns $E_{Y,X}^i(n_i)$ of $X$ to the input node. We identify this with a negative allocation $x_i<0$ on the original assigned direction, i.e. we require:
\begin{equation*}
    E_{Y,X}^i(n_i)=-x_i.
\end{equation*}
Hence,
\begin{equation*}
    n_i=\left(E_{Y,X}^i\right)^{-1}(-x_i).
\end{equation*}
For the conservation law (Eqn \eqref{eqn:conservation}) at the output node to retain the original form:
\begin{equation*}
    \sum_i E_{X,Y}^i(x_i) = F,
\end{equation*}
we must define:
\begin{equation} \label{eqn:extension}
    E_{X,Y}^i(x_i)\coloneqq-n_i=-(E_{Y,X}^i)^{-1}(-x_i)
\end{equation}
for $x_i<0$.
\section{Concave Continuation} \label{sec:concave-continuation}
\Cref{eqn:extension} derived from the invariance of the local conservation law defines the concave continuation. In this section we establish its analytic properties and formulate the routing-arbitrage problem.

\begin{definition}[Concave continuation] \label{def:concave-cont}
    The \textit{concave continuation} of the trade function $E_{X,Y}$ to negative inputs is
    \begin{equation*}
        E_{X,Y}(x) \coloneqq -(E_{Y,X})^{-1}(-x) \quad \text{for } x<0,
    \end{equation*}
    with domain extended to $(-R_X,\infty)$, where $R_X$ is the pool's reserve of token~$X$.
\end{definition}
The term ``concave continuation'' is inspired by the analytic continuation in complex analysis. It turns out that for CFMMS, concave continuation coincides with the analytic continuation (informal \Cref{thm:analytic}).
\begin{proposition}[Properties of the concave continuation] \label{prop:properties}
    Under the assumptions in \S\ref{sec:background}, the concave continuation satisfies:
    \begin{romanenumerate}
        \item \textbf{Monotonicity.} $E_{X,Y}$ is increasing on $(-R_X,\infty)$.
        \item \textbf{Concavity.} $E_{X,Y}$ is concave on $(-R_X,\infty)$.
        \item \textbf{$C^1$ continuity at $x=0$.} $E_{X,Y}\in C^1(-R_X,\infty)$.
    \end{romanenumerate}
\end{proposition}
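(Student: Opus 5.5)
The plan is to treat the two half-lines separately and then glue them at $x=0$. On $[0,\infty)$ every claim is already an assumption of \S\ref{sec:background}, so the work lies on $(-R_X,0)$, where $E_{X,Y}(x)=-(E_{Y,X})^{-1}(-x)$. First I would check that this formula makes sense. $E_{Y,X}$ is continuous and strictly increasing with $E_{Y,X}(0)=0$. Its range is $[0,R_X)$, since the pool cannot pay out more $X$ than its reserve; I would take this as the meaning of $R_X$ in \Cref{def:concave-cont}. Hence $(E_{Y,X})^{-1}$ exists on $[0,R_X)$, and for $x\in(-R_X,0)$ we have $-x\in(0,R_X)$, so the continuation is well defined. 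Write $G\coloneqq(E_{Y,X})^{-1}:[0,R_X)\to[0,\infty)$. Then $G$ is continuous, strictly increasing, and $G(0)=0$, and by the inverse function theorem (using $E'_{Y,X}>0$ and $C^1$) $G$ is $C^1$ with $G'(u)=1/E'_{Y,X}(G(u))$.

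For monotonicity, $x\mapsto -x$ is decreasing, $G$ is increasing, and negation is decreasing, so $x\mapsto -G(-x)$ is increasing on $(-R_X,0)$; indeed its derivative is $G'(-x)>0$. Both pieces are continuous at $0$ and take the value $0$ there. An increasing function on $(-R_X,0]$ glued continuously to an increasing function on $[0,\infty)$ is increasing on the union.

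For concavity on the negative side, note that $E_{Y,X}$ is concave and increasing, so its inverse $G$ is convex. Either observe that the hypograph of $E_{Y,X}$ reflects to the epigraph of $G$, or use derivatives: $G'(u)=1/E'_{Y,X}(G(u))$ is nondecreasing, because $G$ is increasing and $E'_{Y,X}$ is nonincreasing. Then $u\mapsto G(-u)$ is convex, and $-G(-x)$ is concave on $(-R_X,0]$. For the derivative of the continuation I get $\frac{d}{dx}\bigl[-G(-x)\bigr]=G'(-x)=1/E'_{Y,X}(G(-x))$, which is nonincreasing in $x$ on $(-R_X,0)$.

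The main step is the gluing at $0$, which gives both (ii) and (iii). For $C^1$, the derivative from the left is $\lim_{x\to0^-}1/E'_{Y,X}(G(-x))=1/E'_{Y,X}(0)$, by continuity of $G$ and of $E'_{Y,X}$. By the change-of-num\'eraire assumption $E'_{X,Y}(0)E'_{Y,X}(0)=1$, this equals $E'_{X,Y}(0)$, the right derivative. So $E_{X,Y}$ is differentiable at $0$, and its derivative is continuous on all of $(-R_X,\infty)$; this proves (iii). For global concavity I will use the criterion that a differentiable function on an interval is concave if and only if its derivative is nonincreasing. On the left the derivative is nonincreasing, on the right $E'_{X,Y}$ is nonincreasing by concavity, and the two agree at $0$. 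Hence the derivative is nonincreasing on the whole interval, which proves (ii). The obstacle I expect is exactly this matching of derivatives at $0$: without assumption (v), a kink with left slope smaller than right slope would break concavity. The proof should make clear that (v) is what prevents this.
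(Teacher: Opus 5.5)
Your proposal is correct and follows the paper's route: the negative branch is the $180^\circ$ rotation of the increasing convex inverse $(E_{Y,X})^{-1}$, and the two branches are glued in a $C^1$ way at $0$ via the inverse function theorem and the change-of-num\'eraire condition $E'_{X,Y}(0)E'_{Y,X}(0)=1$. Two steps the paper leaves implicit are made explicit in your version: the argument that the glued derivative is nonincreasing across $0$, which gives global concavity, and the check that $(E_{Y,X})^{-1}$ is defined on $[0,R_X)$.
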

\begin{proof}
    Since $E_{Y,X}$ is increasing and concave, $E_{Y,X}^{-1}$ is increasing and convex for $x\geq0$. Since $E_{X,Y}(x)=-E_{Y,X}^{-1}(-x)$ is a $180^\circ$ rotation of $E_{Y,X}^{-1}$ about the origin, $E_{X,Y}(x)$ is increasing and concave for $x\in(-R_X,0]$.
    
    It remains to show that two branches of $E_{X,Y}$ glue to a single function in a $C^1$ manner.

    First,
    \begin{align*}
        \lim_{x\to0^-}E_{X,Y}(x)&=-\lim_{x\to0^-}\left(E_{Y,X}\right)^{-1}(-x)\\
        &=-\lim_{x\to0^+}\left(E_{Y,X}\right)^{-1}(x)\\
        &=0,
    \end{align*}
    where we used $E_{Y,X}(0)=0$, the no-free-lunch condition in the last line. Hence, $E_{X,Y}$ is continuous at $0$.

    Taking derivative,
    \begin{align*}
        \lim_{x\to0^-}E_{X,Y}'(x)&=\lim_{x\to0^-}\left(E'_{Y,X}\right)^{-1}(-x)\\
        &=\lim_{x\to0^+}\left(E'_{Y,X}\right)^{-1}(x)\\
        &=\lim_{x\to0^+}\frac{1}{E_{Y,X}'(x)}\\
        &= \lim_{x\to0^+}E_{X,Y}'(x),
    \end{align*}
    where we used the inverse function theorem in the second last line and the change of num\'eraire assumption in the last line. Hence, $E'_{X,Y}$ is continuous at $0$.
\end{proof}
\begin{example}[Concave continuation for Uniswap V2] \label{example:v2-concave-continuation}
    The inverse function of $E_{Y,X}(y)$ in \Cref{example:v2-trade-functions} is
    \begin{equation*}
        E_{Y,X}^{-1}(x)=\frac{r_Yx}{r_X-x},
    \end{equation*}
    $x\in[0,r_X)$.
    Thus, the concave continuation of $E_{X,Y}$ is
    \begin{align*}
        E_{X,Y}(x)&=-(E_{Y,X})^{-1}(-x)\\
        &=-\frac{r_Y(-x)}{r_X+x}\\
        &=\frac{r_Yx}{r_X+x},
    \end{align*}
    which is identical in functional form with $E_{X,Y}$ for $x\geq0$.
\end{example}
\Cref{example:v2-concave-continuation} is not a coincidence and has nothing to do with the fact that Uniswap V2's trade functions are symmetric under $x\leftrightarrow y$ and $r_x\leftrightarrow r_y$. We formulate the following informal theorem.
\begin{theorem}[Informal] \label{thm:analytic}
    If $E_{X,Y}$ and $E_{Y,X}$ are defined implicitly by an \textit{invariant curve} $G:[0,\infty)\times[0,\infty)\to\mathbb{R}$ via
    \begin{equation*}
        G(r_x+x,r_y-E_{X,Y}(x))=G(r_x,r_y)
    \end{equation*}
    and
    \begin{equation*}
        G(r_x-E_{Y,X}(y),r_y+y)=G(r_x,r_y)
    \end{equation*}
    and that $G(r_x,r_y)=0$ defines a real analytic function $r_y(r_x)$, then the concave continuation of $E_{X,Y}$ coincides with the analytic continuation of $E_{X,Y}$.
\end{theorem}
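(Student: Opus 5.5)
The plan is to show that both branches of the concave continuation are pieces of one curve: the level set $\{G=G(r_x,r_y)\}$, written as the graph of the real analytic function $r_y(r_x)$. On that curve an input of $X$ slides the state to the right and an input of $Y$ slides it to the left. I interpret the hypothesis as saying that the level set through the current reserves is the graph of a real analytic function $\phi$ with $G(u,\phi(u))=G(r_x,r_y)$ and $\phi(r_x)=r_y$, defined on an interval containing $(0,\infty)$.

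\textbf{Step 1.} From the first defining relation, for $x\geq0$ we have $r_y-E_{X,Y}(x)=\phi(r_x+x)$. Hence
\begin{equation*}
E_{X,Y}(x)=r_y-\phi(r_x+x),\qquad x\geq 0.
\end{equation*}
The right-hand side is real analytic in $x$ on $(-r_x,\infty)$. By the identity theorem for real analytic functions, it is therefore the unique analytic continuation of $E_{X,Y}$ to $(-r_x,\infty)$; here $r_x=R_X$.

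\textbf{Step 2.} From the second relation, for $y\geq0$ the point $(r_x-E_{Y,X}(y),\,r_y+y)$ lies on the same level curve. Hence $r_y+y=\phi(r_x-E_{Y,X}(y))$. Substitute $y=(E_{Y,X})^{-1}(-x)$ for $x\in(-R_X,0)$. This is legitimate because $E_{Y,X}$ is strictly increasing from $0$ toward $R_X$, so $-x$ lies in its range. The substitution gives $E_{Y,X}(y)=-x$, and therefore
\begin{equation*}
r_y+(E_{Y,X})^{-1}(-x)=\phi(r_x+x).
\end{equation*}
Rearranging,
\begin{equation*}
-(E_{Y,X})^{-1}(-x)=r_y-\phi(r_x+x),
\end{equation*}
which is exactly the analytic expression from Step 1. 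So on $(-R_X,0)$ the concave continuation of \Cref{def:concave-cont} equals the analytic continuation.

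\textbf{Main obstacle.} The difficulty is not computational but lies in making the informal hypothesis precise. One must ensure that both implicit definitions select the same branch of the level set, that $\phi$ is single-valued and analytic on the whole interval $(0,\infty)$ of $r_x$-values, and that the range of $E_{Y,X}$ is exactly $[0,R_X)$, so that the continuation's domain matches $(-R_X,\infty)$.

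I would handle this as follows. Assume $\partial_{r_y}G\neq0$ along the curve. The implicit function theorem then gives a unique local analytic branch through $(r_x,r_y)$, and monotonicity of the trade functions forces both trades to follow that branch. As a consistency check, the outcome matches \Cref{example:v2-concave-continuation} with $G(u,v)=uv$, since $r_y-\frac{r_xr_y}{r_x+x}=\frac{r_yx}{r_x+x}$.
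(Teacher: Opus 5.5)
Your proof is correct, and it takes a different route from the paper's outline. The paper argues locally at $x=0$. It shows that $E_{X,Y}$ and $x\mapsto-(E_{Y,X})^{-1}(-x)$ have equal derivatives of all orders there, hence identical Taylor series, and concludes by the identity theorem. You argue globally. The two invariance relations say directly that both branches are restrictions of the single real analytic function $x\mapsto r_y-\phi(r_x+x)$. You then need the identity theorem only to assert that this function is \emph{the} analytic continuation of $E_{X,Y}$ from $[0,\infty)$.

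What your route buys is the following. It needs no higher-order implicit differentiation of $G$. It also makes explicit something the Taylor-matching outline needs but never states: the negative branch must be analytic on a neighborhood of $0$. Without that, matching Taylor coefficients says nothing about the function, and the identity theorem cannot be applied. The natural way to prove that analyticity is through the same $\phi$, at which point your argument is already complete.

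What the paper's framing buys is the contrast with \Cref{prop:properties} and \Cref{ex:tick-boundary}. For an analytic invariant curve, the $C^1$ gluing at $0$ improves to agreement to all orders. At a Uniswap V3 tick boundary, the two branches lie on different level curves and already differ at second order; in your language, no single $\phi$ exists there.

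One small point concerns Step 2. The claim that $E_{Y,X}$ increases toward $R_X$ is an extra assumption; it fails, for example, for a level curve with a vertical asymptote at some $u_0>0$. Your computation still goes through verbatim on whatever interval $(E_{Y,X})^{-1}(-x)$ is defined, and \Cref{def:concave-cont} tacitly assumes that interval is $(-R_X,0)$ anyway.
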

\begin{remark*}
     We leave out the definition of invariant curve as it is irrelevant to the transfer algorithm. Also, \Cref{thm:analytic} doesn't apply to Uniswap V3.
\end{remark*}
\begin{remark*}
    We outline a proof of \Cref{thm:analytic}: First, establish that $E_{X,Y}$ and $-(E_{Y,X})^{-1}$ have the same derivatives of all orders at point $0$ so that they have identical Taylor expansions. Then, invoke the identity theorem in complex analysis to conclude $E_{X,Y}\equiv-(E_{Y,X})^{-1}$.
\end{remark*}
The following example shows that \Cref{thm:analytic} doesn't apply to Uniswap V3, which is the focus of transfer algorithm.
\begin{example}[Concentrated liquidity at a tick boundary] \label{ex:tick-boundary}
    Consider a Uniswap~V3 pool sitting at a tick boundary where liquidity jumps. The forward direction ($x\geq0$) enters a tick with reserves $(1,1)$, and the reverse direction ($y\geq0$) enters a tick with reserves $(2,2)$:
    \begin{align*}
        E_{X,Y}(x)&=\frac{x}{1+x},\\
        E_{Y,X}(y)&=\frac{2y}{2+y}.
    \end{align*}
    Both quote the same price at the boundary: $E_{X,Y}'(0)=\frac{1}{E_{Y,X}'(0)}=1$. The concave continuation of $E_{X,Y}$ for $x<0$ is
    \begin{equation*}
        -(E_{Y,X})^{-1}(-x) = \frac{2x}{2+x},
    \end{equation*}
    which is a \emph{different formula} from $\frac{x}{1+x}$. The extended function is $C^1$ at $x=0$ (prices match) but not $C^2$: the second derivative jumps from $E_{X,Y}''(0^+)=-2$ to $E_{X,Y}''(0^-)=-1$, reflecting the liquidity change across the tick. 
\end{example}
With the concave continuation, routing and arbitrage are unified:
\begin{equation} \label{eqn:extended}
    \begin{split}
        &\max_{\mathbf{x}}F(\mathbf{x})=\sum_{i=1}^N E_{X,Y}^i(x_i)\\
        &\text{subject to}:x_i\geq -R_i,\ \sum_{i=1}^N x_i = X,
    \end{split}
\end{equation}
where $R_i$ is the reserve of token $X$ in $i$-th pool. Pools with $x_i>0$ produce $Y$; pools with $x_i<0$ consume $Y$. The objective is strictly concave (\Cref{prop:properties}), so the optimum is unique.
\begin{remark*}
    $X=0$ corresponds to a pure arbitrage.
\end{remark*}
\begin{remark*}
    Problem \eqref{eqn:extended} has $N$ variables, $1$ linear equality, and $N$ box constraints. The nonlinearity resides in the concave objective. This is significant easier than introducing dual variables for each edge because that introduces non-linear terms to the constraint.
\end{remark*}

\section{The Extended Transfer Algorithm} \label{sec:algorithm}
In this section, we extend the transfer algorithm to the domain $x_i\in(-R_i,\infty)$. We list the algorithms first.
\Cref{alg:init} initializes by greedily distributing $X$ in portions to the currently cheapest pool.
\begin{algorithm}[htbp!]
    \caption{Greedy initialization} \label{alg:init}
    \KwIn{Pools $E_{X,Y}^1,\ldots,E_{X,Y}^N$; total input $X$; number of portions $M$}
    \KwOut{Initial allocation $\mathbf{x}^{(0)}$}
    $\mathbf{x}^{(0)}\gets\mathbf{0}$ \;
    \For{$i=1,\ldots,M$}{
        $R\gets\arg\min_i P_i\left(x_i^{(0)}\right)$ \;
        $x_R^{(0)}\gets x_R^{(0)}+\frac{X}{M}$ \;
    }
    \Return{$\mathbf{x}^{(0)}$}
\end{algorithm}

\Cref{alg:halving} gives the extended halving rule.
\begin{algorithm}[htbp!]
    \caption{Extended halving rule} \label{alg:halving}
    \KwIn{Donor $D$, receiver $R$, allocations $x_D$ and $x_R$, reserve $R_D$}
    \KwOut{Legitimate transfer amount $\delta$}
    \eIf{$x_D > 0$}{
        $\delta\gets\frac{x_D}{2}$ \tcp*{bisect $[0, x_D]$; donor stays $\geq 0$}
    }{
        $\delta\gets\frac{R_D+x_D}{2}$ \tcp*{bisect $[-R_D, x_D]$; donor stays in domain}
    }
    \lWhile{$P_D(x_D - \delta) < P_R(x_R + \delta)$}{
        $\delta\gets\frac{\delta}{2}$
    }
    \Return{$\delta$}
\end{algorithm}

\Cref{alg:extended} is the extended transfer algorithm.
\begin{algorithm}[htbp!]
\caption{Extended transfer algorithm}\label{alg:extended}
\KwIn{Pools $E_{X,Y}^1,\ldots,E_{X,Y}^N$ with reserves $R_1,\ldots,R_N$; total input $X$; tolerance $\varepsilon$}
\KwOut{Allocation $\mathbf{x}$}
Greedy initialization by \Cref{alg:init} \;
\While{$\frac{P_D-P_R}{P_D}>\varepsilon$}{
    Compute $\delta$ by extended halving rule (\Cref{alg:halving}) \;
    $x_D\gets x_D-\delta$ \;
    $x_R\gets x_R+\delta$ \;
    $D,R\gets\arg\max_i P_i(x_i),\arg\min_i P_i(x_i)$ \;
}
\Return{$\mathbf{x}$}
\end{algorithm}

Then we prove some results about the runtime of \Cref{alg:extended}, from which both the convergence and convergence rate of can be obtained following \cite{jiang,jiang-convergence}.

Denote by $S$ the set of all pools, $\mathbf{x}^*$ the optimal allocation of problem \eqref{eqn:extended} with common marginal $\lambda^*$. Define the optimal allocation sets:
\begin{align*}
    &S_+\coloneqq\{j\mid x_j^*>0\},\\
    &S_-\coloneqq\{j\mid x_j^*<0\},\\
    &S_0\coloneqq S-S_+-S_-.
\end{align*}
We use the superscript $k$ to indicate those sets during the execution of \Cref{alg:extended}.
Now we derive stronger results that hold throughout the execution of the algorithm.
\begin{theorem}[Attractors] \label{theorem-attractors}
    Under the legitimacy requirement, the following holds.
    \begin{romanenumerate}
        \item If $j\in S^{(l)}_+$ for some round $l\geq0$, $j\in S^{(m)}_+$ for all $m>l$.
        \item If $j\in S^{(l)}_-$ for some round $l\geq0$, $j\in S^{(m)}_-$ for all $m>l$.
    \end{romanenumerate}
\end{theorem}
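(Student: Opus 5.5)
The plan is an induction on rounds. The induction step rests on one observation about the extended halving rule (\Cref{alg:halving}): it never moves a pool across $0$ in the direction that would take it out of its current sign class. We interpret $S^{(l)}_+ = \{j \mid x^{(l)}_j>0\}$ and $S^{(l)}_- = \{j\mid x^{(l)}_j<0\}$ as the sign classes of the iterate $\mathbf{x}^{(l)}$. The ``legitimacy requirement'' is read as the condition enforced by \Cref{alg:halving}: each transfer $\delta$ is legitimate, i.e.\ after the transfer $P_D(x_D-\delta)\ge P_R(x_R+\delta)$, so the donor and receiver do not overshoot each other.

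It suffices to prove the one-step statements $j\in S^{(l)}_\pm \Rightarrow j\in S^{(l+1)}_\pm$ and then induct on $m$. In round $l$ only the donor $D$ and the receiver $R$ change; all other coordinates are fixed. The receiver's coordinate increases, $x_R\mapsto x_R+\delta$ with $\delta>0$, so a receiver in $S_+$ stays in $S_+$. A donor in $S_+$ has $x_D>0$, so the first branch of \Cref{alg:halving} gives $\delta\le x_D/2$ (the while loop only shrinks $\delta$). Hence $x_D-\delta\ge x_D/2>0$, which proves (i).

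For (ii), a donor with $x_D<0$ uses the second branch, $\delta\le (R_D+x_D)/2$. Then $x_D-\delta\ge (x_D-R_D)/2>-R_D$, so the donor stays in the domain, and $x_D-\delta<x_D<0$, so it remains negative. The delicate case is a pool $j\in S^{(l)}_-$ chosen as \emph{receiver}, since $x_j$ increases and could cross $0$. This is where legitimacy and the $C^1$ gluing of \Cref{prop:properties} enter.

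Suppose $x_R<0$ and $x_R+\delta\ge 0$. Concavity and monotonicity of the continuation give $P_R$ nondecreasing, so $P_R(x_R+\delta)\ge P_R(0)$. I would argue this contradicts legitimacy by comparing with the donor. A pool sits at negative allocation only because it was previously a donor, since the greedy initialization (\Cref{alg:init}) starts in $S^{(0)}_+\cup S^{(0)}_0$. So I would track an invariant along the run: every pool in $S^{(k)}_-$ has price at most the current donor's price threshold, with $P_j(0)$ on the far side.

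Concretely, I would first try to show inductively that $\max_{j\in S^{(k)}_-}P_j(0)\le \min_{i} P_i(x^{(k)}_i)$ is false. More usefully, I would aim to show that a pool in $S_-$ is never the argmin of $P$ when there is a pool whose price is below its $P(0)$. If that invariant fails, the fallback is to read the legitimacy requirement as explicitly including sign preservation, $\operatorname{sign}(x_R+\delta)=\operatorname{sign}(x_R)$ for negative receivers, with the halving loop shrinking $\delta$ until it holds. Under that reading (ii) becomes immediate, by the same bisection argument as for donors.

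I expect this receiver-in-$S_-$ case to be the main obstacle. The other three cases follow directly from the explicit bounds on $\delta$ in \Cref{alg:halving}.
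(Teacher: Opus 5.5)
Your cases for (i), for a negative donor in (ii), and for bystanders are fine. For (i), the explicit bound $\delta\le x_D/2$ is more direct than the paper's contradiction argument. The gap is the one case that carries the content of (ii): a pool $j\in S^{(l)}_-$ chosen as receiver. There you give no argument.

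The invariant you state concretely, that a pool in $S_-$ is never the argmin of $P$, is false. Take $X=0$ and three V2 pools with $P_1(x)=4(1+x)^2$, $P_2(x)=0.5(1+0.8x)^2$, $P_3(x)=3(1+x)^2$, whose reserves of $X$ are $1$, $1.25$, $1$.
\begin{itemize}
\item Pool $1$ donates $\delta=0.5$ to pool $2$, and their prices become $1$ and $0.98$.
\item Pool $3$ then donates $\delta=0.25$ to pool $2$, and their prices become $1.6875$ and $1.28$.
\end{itemize}
Now pool $1$, at allocation $-0.5$, is the cheapest pool and becomes the next receiver.

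Your fallback, building sign preservation into legitimacy, assumes the conclusion. The loop in \Cref{alg:halving} tests only $P_D(x_D-\delta)<P_R(x_R+\delta)$, so (ii) has to be derived from that condition alone.

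The missing idea is to use $j$'s history through the maximum price. Strengthen your induction hypothesis to: $P_j(0)\ge\max_i P_i(x^{(k)}_i)$ for every $j\in S^{(k)}_-$. The base case holds because $S^{(0)}_-=\emptyset$.

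By your one-step results, a pool can enter $S_-$ only as the donor in a round $k$ with $x^{(k)}_j=0$. At that moment $P_j(0)=\max_i P_i(x^{(k)}_i)$. A legitimate transfer never raises the maximum price: the donor's price strictly falls, and the receiver's new price is at most the donor's new price. So the invariant persists.

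When such a $j$ is the receiver in round $m$ with donor $D$, you get
\begin{equation*}
P_j\bigl(x^{(m)}_j+\delta\bigr)\le P_D\bigl(x^{(m)}_D-\delta\bigr)<P_D\bigl(x^{(m)}_D\bigr)=\max_i P_i\bigl(x^{(m)}_i\bigr)\le P_j(0).
\end{equation*}
Since $P_j$ is nondecreasing on $(-R_j,\infty)$, this forces $x^{(m)}_j+\delta<0$, so $j$ stays in $S_-$. This is exactly the paper's proof of (ii). Its proof of (i) is the mirror image, using that $P_{\min}$ is non-decreasing.
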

\begin{proof}
    (i) Suppose for contradiction that $j\in S-S^{(m)}_+$ for some $m>l$. WLOG, we may assume $j$ were chosen as a donor in round $m$, $x^{(m)}_j>0$ and $\delta\geq x^{(m)}_j$. Denote by $R^{(m)}$ the receiver in round $m$.
    
    Since $j$ acquired its positive allocation only by becoming a receiver in some round $k<l$ with $x^{(k)}_j=0$, it would follow that
    \begin{align*}
        \min_{i\in S}P_i\left(x_i^{(k)}\right)&=P_j(0)\geq P_j\left(x_j^{(m)}-\delta\right)\geq P_{R^{(m)}}\left(x_R^{(m)}+\delta\right)\\
        &>P_{R^{(m)}}\left(x_R^{(m)}\right)=\min_{i\in S}P_i\left(x_i^{(m)}\right)\geq\min_{i\in S}P_i\left(x_i^{(k)}\right),
    \end{align*}
    where the first and third inequalities are by the convexity of $P$, the second by the legitimacy requirement, and the last by $P_{\min}$ being non-decreasing in round, a contradiction.
    
    (ii) Suppose for contradiction that $j\in S-S^{(m)}_-$ for some $m>l$. WLOG, we may assume $j$ were chosen as a receiver in round $m$, $x^{(m)}_j<0$ and $\delta\geq-x^{(m)}_j$. Denote by $D^{(m)}$ the donor in round $m$.
    
    Since $j$ acquired its negative allocation only by becoming a donor in some round $k<l$ with $x^{(k)}_j=0$, it would follow that
    \begin{align*}
        \max_{i\in S}P_i\left(x_i^{(k)}\right)&=P_j(0)\leq P_j\left(x_j^{(m)}+\delta\right)\leq P_{D^{(m)}}\left(x_{D}^{(m)}-\delta\right)\\
        &<P_{D^{(m)}}\left(x_{D}^{(m)}\right)=\max_{i\in S}P_i\left(x_i^{(m)}\right)\leq\max_{i\in S}P_i\left(x_j^{(k)}\right),
    \end{align*}
    a contradiction.
\end{proof}
\begin{corollary}
    Under the legitimacy requirement, the following holds.
    \begin{romanenumerate}
        \item If $j\in S_+$, then $j\notin S^{(k)}_-$ for all $k\geq0$.
        \item If $j\in S_-$, then $j\notin S^{(k)}_+$ for all $k\geq0$.
    \end{romanenumerate}
\end{corollary}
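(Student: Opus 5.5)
We argue by contradiction, combining the attractor property of \Cref{theorem-attractors} with a monotonicity of $P_{\min}$ and $P_{\max}$ over the rounds and the KKT characterization of $\mathbf{x}^*$. We treat (i); (ii) is symmetric, obtained by exchanging donor/receiver, $P_{\min}$/$P_{\max}$ and the signs.

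\textbf{Step 1: monotone bounds and convergence.} As used in the proof of \Cref{theorem-attractors}, under the legitimacy requirement $P_{\min}^{(k)}\coloneqq\min_i P_i(x_i^{(k)})$ is non-decreasing in $k$ and $P_{\max}^{(k)}$ is non-increasing. The reason is that after a legitimate transfer the donor's new price is still at least the receiver's new price. We also use the convergence result ported from \cite{jiang,jiang-convergence}: the iterates approach $\mathbf{x}^*$, and the common price tends to $1/\lambda^*$. It follows that for every $k$,
\begin{equation*}
P_{\min}^{(k)}\le 1/\lambda^*\le P_{\max}^{(k)}.
\end{equation*}

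\textbf{Step 2: contradiction for (i).} Let $j\in S_+$ and suppose $j\in S_-^{(k)}$ for some $k$. By \Cref{theorem-attractors}(ii), $j\in S_-^{(m)}$ for all $m\ge k$, so $x_j^{(m)}<0$ for every such $m$. Passing to the limit gives $x_j^*\le 0$, which contradicts $x_j^*>0$.

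This limit argument needs convergence of the iterates themselves, not only of prices. If we want to avoid that, there is a direct price argument. Since $j$ first became negative as a donor at some round $k_0$ with $x_j^{(k_0)}=0$, we have $P_j(0)=P_{\max}^{(k_0)}\ge 1/\lambda^*$. Because $P_j$ is increasing, $x_j^*>0$ gives $P_j(x_j^*)>P_j(0)$. By KKT, $P_j(x_j^*)=1/\lambda^*$. Together these give $P_j(0)<1/\lambda^*$, contradicting $P_j(0)\ge 1/\lambda^*$.

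\textbf{Main obstacle.} The hard part is justifying $P_{\max}^{(k)}\ge 1/\lambda^*$ for all $k$ without circularity. We would argue directly rather than through convergence. Suppose $P_{\max}^{(k)}<1/\lambda^*$. Then every pool has $P_i(x_i^{(k)})<P_i(x_i^*)$, so by monotonicity $x_i^{(k)}<x_i^*$ for all $i$. Since each $P_i$ is strictly increasing on the extended domain, this gives
\begin{equation*}
\sum_i x_i^{(k)}<\sum_i x_i^*=X,
\end{equation*}
which contradicts the budget constraint preserved by every transfer. The bound $P_{\min}^{(k)}\le 1/\lambda^*$ follows symmetrically.

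Box-constrained pools need separate care. If $x_i^*=-R_i$, then $P_i(x_i^*)$ is only bounded by $1/\lambda^*$ rather than equal to it. For such pools we still have $x_i^{(k)}\ge -R_i=x_i^*$, which goes in the harmless direction for the $P_{\min}$ bound. For the $P_{\max}$ bound we handle it by noting that the domain is open at $-R_i$, so such pools cannot be optimal in $S_-$ at the boundary.
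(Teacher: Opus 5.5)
Your direct price argument is correct, and it takes a different route from the paper. The paper gives no proof of the corollary and presents it as an immediate consequence of \Cref{theorem-attractors}. Read with its remark that legitimacy ``implies convergence of algorithm'', the derivation it suggests is your limit argument: signs are frozen once acquired, so a wrong sign is incompatible with $\mathbf{x}^{(k)}\to\mathbf{x}^*$. As you suspected, that is circular inside the paper, because the proof of \Cref{thm:convergence} invokes this very corollary.

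Your replacement is a static sandwich. For any allocation with $\sum_i x_i=X$, the budget constraint and KKT force $\min_i P_i(x_i)\le 1/\lambda^*\le\max_i P_i(x_i)$. A pool can only turn negative as the donor at $x_j=0$, so $P_j(0)=P_{\max}^{(k_0)}\ge 1/\lambda^*$, which contradicts $P_j(0)<P_j(x_j^*)=1/\lambda^*$. For (i) this needs neither convergence, nor \Cref{theorem-attractors}, nor the monotonicity of $P_{\max}$ from Step 1. It uses only strict monotonicity of each $P_i$ and interiority of $\mathbf{x}^*$.

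Your boundary worry is in the wrong place. If some $x_i^*=-R_i$ were allowed, KKT gives $P_i(-R_i)\ge 1/\lambda^*$, which already points the right way for the $P_{\max}$ bound. It is the $P_{\min}$ bound that needs your feasibility argument, plus one interior pool for strictness.

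The one real flaw is the claim that (ii) is the mirror image of (i). The premise in (i), that $j$ first became negative as the donor at $0$, holds for three reasons: the initial allocation is nonnegative, receivers only gain, and \Cref{alg:halving} keeps a positive donor at $x_D-\delta\ge x_D/2>0$. Its mirror, ``$j$ first became positive as the receiver at $0$'', fails in two ways. First, \Cref{alg:init} hands out positive allocations outside the main loop. Second, nothing in the choice of $\delta$ stops a receiver with $x_j<0$ from jumping past $0$, since $\delta$ is calibrated to the donor's interval, not the receiver's.

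Both holes are easy to close:
\begin{itemize}
\item The jump past $0$ is ruled out by \Cref{theorem-attractors}(ii). Its proof uses only legitimacy and the monotonicity of $P_{\max}$, so there is no circularity.
\item For initialization, look at the step where $j$ first receives. There $P_j(0)$ is the minimum price over a partial allocation with $\sum_i x_i\le X$. Your budget argument still yields $\min_i P_i(x_i)\le 1/\lambda^*$ in that case, contradicting $P_j(0)>P_j(x_j^*)=1/\lambda^*$ for $j\in S_-$.
\end{itemize}
What remains, a receiver at exactly $0$ in the main loop, is handled by your argument with $P_{\min}^{(k_0)}\le 1/\lambda^*$.
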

In summary, the runtime of \Cref{alg:extended} exhibits the following.
\begin{romanenumerate}
    \item $S^{(k)}_+$ expands to $S_+$ and once a pool joins $S^{(k)}_+$, it should (by legitimacy requirement, which implies convergence of algorithm) and will (by \Cref{alg:halving}) stay in it.
    \item $S^{(k)}_-$ expands to $S_-$ and once a pool joins $S^{(k)}_-$, it should and will stay in it.
    \item $S^{(k)}_0$ shrinks to $S_0$ and once a pool leaves $S^{(k)}_0$, it shouldn't and won't return to it.
\end{romanenumerate}
It may happen that $S_0\neq\emptyset$. The members in it are those whose initial price happens to be the optimal price.
\begin{theorem}[Convergence] \label{thm:convergence}
    \Cref{alg:extended} converges to the unique optimal of problem \eqref{eqn:extended}.
\end{theorem}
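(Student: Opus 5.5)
The plan is a monotone-potential argument, reducing to the convergence proof of \cite{jiang} via the sign structure from \Cref{theorem-attractors}. Throughout, work with $P_i=1/E^{i\prime}_{X,Y}$. By \Cref{prop:properties}, $P_i$ is continuous and non-decreasing on $(-R_i,\infty)$, and strictly increasing by strict concavity. First check that every iterate is feasible. The extended halving rule (\Cref{alg:halving}) starts from at most half the distance to the relevant boundary ($0$ if $x_D>0$, $-R_D$ otherwise). The budget $\sum_i x_i=X$ is preserved by each transfer and by \Cref{alg:init}. Hence $x_i^{(k)}\in(-R_i,\infty)$ and $\sum_i x_i^{(k)}=X$ for all $k$.

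Next, establish monotonicity. Since $\delta>0$ is accepted only when $P_D(x_D-\delta)\ge P_R(x_R+\delta)$, and the donor's price falls while the receiver's rises, the new donor price stays $\ge$ the new receiver price. Therefore $P_{\max}^{(k)}$ is non-increasing and $P_{\min}^{(k)}$ is non-decreasing; these facts were already used in \Cref{theorem-attractors}. Both sequences are bounded, so they converge to limits $\overline P\ge\underline P$. The objective $F$ also increases strictly in each round. By the mean value theorem, $F(\mathbf x^{(k+1)})-F(\mathbf x^{(k)})=\delta\,(E_R'(\xi_R)-E_D'(\xi_D))\ge0$, because the legitimacy condition gives $E_D'(\xi_D)\le E_R'(\xi_R)$ for the intermediate points. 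By \Cref{theorem-attractors} and its corollary, the signs of all coordinates stabilize after finitely many sign changes. Each coordinate can leave $S_0^{(k)}$ at most once, so past some round $K$ the sign pattern is fixed. From then on every iterate lies in a fixed orthant-face where each coordinate stays on one side of $0$ and inside the domain.

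The main obstacle is showing $\overline P=\underline P$, i.e.\ that the gap closes instead of stalling because the halving keeps shrinking $\delta$. I would argue by contradiction as in \cite{jiang}. Suppose $\overline P-\underline P\ge\eta>0$. The iterates are bounded, since $x_i>-R_i$ and the sum is fixed, so pass to a convergent subsequence with a limit $\bar{\mathbf x}$. By continuity of the $P_i$, the current donor and receiver have prices at least $\eta$ apart. The accepted $\delta$ is then bounded below by a constant $c(\eta)>0$, using uniform continuity of the $P_i$ on a compact set that keeps the donor a fixed distance from its boundary.

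The boundary point needs care. If the donor sits at $x_D\to0^+$ or $x_D\to -R_D$, the initial $\delta$ may vanish. The first case is handled by the attractor property: a positive donor near $0$ has price near $P_D(0)$, which is at most the current $P_{\min}$ by the argument in \Cref{theorem-attractors}(i). This contradicts its being the donor with gap $\eta$. The case $x_D\to-R_D$ is excluded because the price $P_D$ blows down there, so such a pool cannot be the maximum. This is the step I expect to need the most care. With $\delta\ge c(\eta)$ in each round, each step increases $F$ by at least a fixed positive amount, which contradicts boundedness of $F$ on the feasible set. Hence $\overline P=\underline P=:1/\lambda$.

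Finally, identify the limit. Every limit point $\bar{\mathbf x}$ is feasible and has all prices equal to $1/\lambda$. The remaining domain constraints $x_i\ge-R_i$ are inactive, since the iterates stay strictly inside by the argument above. So $\bar{\mathbf x}$ satisfies the KKT conditions of \eqref{eqn:extended} with a common marginal $\lambda$. Since the problem is concave with a strictly concave objective, $\bar{\mathbf x}=\mathbf x^*$. Uniqueness of the limit point then gives convergence of the whole sequence, and the stopping test is met in finite time for every $\varepsilon>0$.
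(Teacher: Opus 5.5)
Your proof is correct, and it is more quantitative than the paper's. The paper only notes that $P_D-P_R$ is non-increasing under legitimate transfers and asserts that it strictly decreases (up to a tie case). It then invokes \Cref{theorem-attractors} to exclude the one obstruction it names: a pool stranded on the wrong side of $0$ because the cap $x_D/2$ never lets it cross. The actual convergence is left to \cite{jiang}. Strict decrease alone does not force the gap to $0$, and your argument supplies that missing step. You use $F$ as a potential bounded above by $\sum_i E^i_{X,Y}(X+\sum_j R_j)$. You show that a persistent gap $\eta$ forces a uniform lower bound on the accepted $\delta$, hence a uniform gain in $F$. You then identify the limit through the KKT conditions with all box constraints inactive.

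You also use the attractor argument in a sharper form: $P_j(0)\le P^{(m)}_{\min}$ for every pool that has become positive by round $m$, which keeps the cap $x_D/2$ from degenerating. And you treat the second degeneracy $x_D\to-R_D$, which the paper never mentions.

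A few points to tighten:
\begin{itemize}
\item $P_i(x)\to0$ as $x\to-R_i^+$ holds because $E^i_{Y,X}$ is concave, increasing and bounded by the reserve, so $E^{i\prime}_{Y,X}(y)\to0$. Combined with $P_i(x_i^{(k)})\ge P^{(0)}_{\min}>0$, this keeps \emph{every} coordinate uniformly away from $-R_i$. That is what you need for interior limit points; strict feasibility of the iterates alone does not give it.
\item The bound $P_j(0)\le P_{\min}$ also covers pools made positive in \Cref{alg:init}, since $P_{\min}$ is non-decreasing there as well.
\item The uniform gain in $F$ should come from splitting $\int_0^\delta\bigl(E_R'(x_R+t)-E_D'(x_D-t)\bigr)\,dt$ at a small $t_0(\eta)$. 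The bare mean value theorem is not enough, because its intermediate points may lie near the crossing where the integrand is tiny.
\item The subsequence is unnecessary, since uniform continuity on the compact range of the iterates already yields the bounds.
\end{itemize}

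The paper's version buys brevity and a formulation from which the rate analysis of \cite{jiang-convergence} ports directly. Yours buys a self-contained proof.
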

\begin{proof}
    Under the legitimacy requirement, $P_D-P_R$ is always non-increasing. We need to show that it always decreases\footnote{Except in a rather trivial case: There are two pool, one with price same as donor and the other as receiver. Then after a legitimate transfer, $P_D-P_R$ stays the same. This can be eliminated by the next round and isn't an obstacle to convergence.}. The only way that it doesn't decrease is a pool with positive/negative optimal allocation is allocated with a negative/positive amount during the run of the algorithm so that \Cref{alg:halving} can't correct it to the optimal sign. However, \Cref{theorem-attractors} and its corollary say that the legitimacy requirement also implies that such scenario may not occur.
\end{proof}
\begin{remark*}[Convergence rate]
    The proof of the convergence rate $\mathcal{O}\left(N\kappa\log\frac{1}{\varepsilon}\right)$ established in \cite{jiang-convergence} also hold. The essence is that every finite size legitimate transfer makes non-infinitesimal improvement to the objective function. And this is upheld if a pool always stays in the region where its optimal allocation lies.
\end{remark*}
\section{Experiments} \label{sec:experiments}
We validated the extended transfer algorithm (\Cref{alg:extended}) by three experiments: a Python implementation against \cite{angeris-routing}, a Julia implementation against \cite{angeris-routing,diamandis-routing}, and an on-chain Solidity implementation on Base mainnet against the original transfer algorithm \cite{jiang}. We omitted a comparison against \cite{convex-flows} as it turned out hard to use their formulation in this setting without implementing some feathers ourselves.

\subsection{Python simulation on V2 pools} \label{sec:exp-python}

As \cite{angeris-routing} is implemented in Python and only supports Uniswap V2 pools, we compare our algorithm with them in Julia and using Uniswap V3. In addition, \cite{angeris-routing} supports arbitrage by using two non-negative variables $(\Delta_i,\Lambda_i)$ for each edge.

We sampled $100$ Uniswap V2 pools and the input was $X=100$. We used the original transfer algorithm \cite{jiang} as a baseline comparison for output.
\begin{table}[htbp!]
    \centering
    \begin{tabular}{r | r r | r r | r r}
        \hline
        & \multicolumn{2}{c|}{\textbf{Transfer} (\Cref{alg:extended})} & \multicolumn{2}{c|}{CVXPY \cite{angeris-routing}} & \multicolumn{2}{c}{Transfer \cite{jiang}} \\
        $N$ & ms & $Y$ & ms & $Y$ & ms & $Y$ \\
        \hline
        3   &  0.21 &  58.87 &     12.1 &  58.87 & \textbf{0.32} &  58.87 \\
        5   &  0.27 & 114.50 &     17.7 & 114.50 & \textbf{0.20} & 114.12 \\
        10  &  0.65 & 146.13 &     31.9 & 146.13 & \textbf{0.21} & 132.56 \\
        20  &  1.29 & 284.32 &     61.1 & 284.32 & \textbf{0.21} & 175.69 \\
        50  &  3.27 & 550.76 &    149.7 & 550.76 & \textbf{0.21} & 208.45 \\
        100 &  6.79 & 946.28 &    313.3 & 946.28 & \textbf{0.21} & 234.36 \\
        \hline
    \end{tabular}
    \caption{Routing+arbitrage on randomly generated V2 pools.}
    \label{tab:python}
\end{table}

\Cref{alg:extended} and \cite{angeris-routing} agreed on output at every $N$, but is 8--22 times faster. The arbitrage gain over routing only became significant when $N\geq 10$. The baseline was very fast and we found that it was caused by: After the greedy initialization, most pools' prices were above the highest post-allocation price (When $N=100$, only $15$ pools were active).

\subsection{Julia simulation on V3 pools} \label{sec:exp-sim}
As \cite{diamandis-routing} is implemented in Julia and supports Uniswap V3 pools, we compare our algorithm with them in Julia and using Uniswap V3.

We sampled $100$ Uniswap V3 pools and the input was $X=100$. We used the original transfer algorithm \cite{jiang} as a baseline comparison for output.

\begin{table}[htbp!]
    \centering
    \begin{tabular}{r | r r | r r | r r}
        \hline
        & \multicolumn{2}{c|}{\textbf{Extended transfer} (ours)} & \multicolumn{2}{c|}{CFMMRouter~\cite{diamandis-routing}} & \multicolumn{2}{c}{Transfer~\cite{jiang}} \\
        $N$ & $\mu$s & $Y$ & $\mu$s & $Y$ & $\mu$s & $Y$ \\
        \hline
        5   & 48  &  98.55 &     121 &  98.55 &    68 &  98.55 \\
        10  &           76 & 128.47 &     112 & 128.47 &    33 & 127.47 \\
        20  & 60  & 188.37 &     195 & 188.37 &    44 & 163.47 \\
        50  &          650 & 306.91 &     515 & 306.92 &    80 & 184.72 \\
        100 & 461 & 521.79 &   1{,}052 & 521.82 &   136 & 204.54 \\
        \hline
    \end{tabular}
    \caption{Routing+arbitrage on randomly generated V3 pools.}
\label{tab:julia}
\end{table}

\Cref{alg:extended} and \cite{diamandis-routing} agreed on output at every $N$ but was always faster. The non-arbitrage baseline \cite{jiang} produced strictly less output except when $N=5$. However, the baseline was slower only when $N=5$.

\subsection{On-chain validation on Base} \label{sec:exp-onchain}
We test on Base mainnet (block~43{,}550{,}927) using 4 Uniswap V3 USDC/WETH pools at fee tiers 1bp, 5bp, 30bp, and 100bp.

The extended algorithm (\Cref{alg:extended}) can be run fully on-chain with realistic gas cost and no additional smart contract. The post-allocation price is calculated using \texttt{sqrtPriceX96After} returned by Uniswap V3 quoters. The concave continuation is calculated on-chain by \texttt{quoteExactOutputSingle}, also native to the Uniswap V3.

\subparagraph*{No arbitrage opportunity.}
All 4 pools shows no sign of arbitrage opportunity.  We traded USDC$\to$WETH with input amounts from \$10 to \$100{,}000. Concave extension produced no additional output than the original algorithm. However, the gas cost was also the same.

There are two reasons. First, USDC/WETH pair is among the most traded pairs on-chain and MEV bots can capture any arbitrage opportunity very quickly. Second, real AMMs always have fees, which make a small price discrepancy a non-profitable ``arbitrage''.
\begin{table}[htbp!]
    \centering
    \label{tab:equilibrium}
    \begin{tabular}{r | r r r}
        \hline
        Input (USDC) & WETH output & Gas & Rounds \\
        \hline
        10       & 0.00455 & 1.4M & 0 \\
        100      & 0.04549 & 1.5M & 0 \\
        1{,}000  & 0.4549  & 1.7M & 0 \\
        10{,}000 & 4.5449  & 5.3M & 2 \\
        100{,}000 & 45.470  & 40M  & 8 \\
        \hline
    \end{tabular}
    \caption{No arbitrage opportunity within four Uniswap V3 USDC/WETH pools on Base.}
\end{table}

\subparagraph*{Induced price discrepancy.}
To create an arbitrage opportunity and demonstrate that the extended algorithm also works in real-world setting, we inserted a swap of size 100{,}000~USDC into the 100bp pool before the run of \Cref{alg:extended}. We then routed in a fresh 100{,}000~USDC. The result is in \Cref{tab:induced-price-discrepancy}.
\begin{table}[htbp!]
    \centering
    \begin{tabular}{l | r r}
        \hline
        & WETH output & Gas \\
        \hline
        Transfer algorithm \cite{jiang} & 45.121 & 7.5M \\
        \textbf{Concave extension} & \textbf{45.344} & \textbf{3.2M} \\
        \hline
        Improvement & \textbf{+49 bps} ($\approx$ \$560) & \\
        \hline
    \end{tabular}
    \caption{Arbitrage opportunity after induced price discrepancy.}
    \label{tab:induced-price-discrepancy}
\end{table}

The original algorithm avoided the overpriced pool entirely and routed through the remaining three. The concave extension additionally assigned the inflated pool a negative allocation: sending WETH back to extract more USDC, and routed the freed USDC through cheaper pools for additional output. We note that the gas cost was lowered. However, we doubt if there is any theoretical guarantee for that.
\section{Discussion} \label{sec:discussion}
In this article, we proposed concave continuation, which is derived from the local conservation law in routing problem. We unified the arbitrage problem with the routing problem via the concave continuation.

We extended the transfer algorithm in \cite{jiang} using concave continuation. We proved a runtime property of the extended transfer algorithm (\Cref{theorem-attractors}), which shows that $0$ as an impenetrable barrier of allocation amounts during the run of \Cref{alg:extended}. This prevents a pool with positive/negative optimal allocation to stuck with the wrong allocation under the halving rule (\Cref{alg:halving}). Equivalently, it says during the allocation process, a pool may not acquire an allocation of the opposite sign to that of the optimal one during the entire run of \Cref{alg:extended}.

The experiment result showed the expected behavior and efficiency. In fact, the extended transfer algorithm can be implemented and run on-chain with reasonable gas cost.

Theoretically, concave continuation is the first step toward a multi-hop transfer algorithm: If there are some intermediate tokens $M$ and $N$ in the trade from token $X$ to $Y$, we do not know a priori what the trade direction between $M$ and $N$ should be nor are we certain the allocation will flip signs or not during the routing algorithm's run. This remains future work.

\bibliography{biblio}

\end{document}